\numberwithin{equation}{section}
\theoremstyle{plain}
\newtheorem{theorem}{Theorem}[section]
\newtheorem{proposition}[theorem]{Proposition}
\newtheorem{lemma}[theorem]{Lemma}
\newtheorem{corollary}[theorem]{Corollary}
\newtheorem{conjecture}[theorem]{Conjecture}
\theoremstyle{definition}
\newcommand\R{\mathbb{R}}
\newcommand\Z{\mathbb{Z}}
\newcommand\N{\mathbb{N}}
\begin{document}

\title{Deterministic methods to find primes}

\author{D.H.J. Polymath}
\address{http://michaelnielsen.org/polymath1/index.php}

\subjclass{11Y11}

\begin{abstract}  Given a large positive integer $N$, how quickly can one construct a prime number larger than $N$ (or between $N$ and $2N$)?  Using probabilistic methods, one can obtain a prime number in time at most $\log^{O(1)} N$ with high probability by selecting numbers between $N$ and $2N$ at random and testing each one in turn for primality until a prime is discovered.  However, if one seeks a deterministic method, then the problem is much more difficult, unless one assumes some unproven conjectures in number theory; brute force methods give a $O(N^{1+o(1)})$ algorithm, and the best unconditional algorithm, due to Odlyzko, has a runtime of $O(N^{1/2 + o(1)})$.

In this paper we discuss an approach that may improve upon the $O(N^{1/2+o(1)})$ bound, by suggesting a strategy to determine in time $O(N^{1/2-c})$ for some $c>0$ whether a given interval in $[N,2N]$ contains a prime.  While this strategy has not been fully implemented, it can be used to establish partial results, such as being able to determine the \emph{parity} of the number of primes in a given interval in $[N,2N]$ in time $O(N^{1/2-c})$.
\end{abstract}

\maketitle



\section{Introduction}

We\footnote{A list of people involved in this Polymath project can be found at {\tt michaelnielsen.org/polymath1/index.php?title=Polymath4\_grant\_acknowledgments}.} consider the following question: given a large integer $N$, how easy is it to generate a prime number that is larger than $N$?

Of course, since there are infinitely many primes, and each integer can be tested for primality in finite time, one can always answer this question in finite time, simply by the brute force method of testing each integer larger than $N$ in turn for primality.  So the more interesting question is to see how rapidly one can achieve this, and in particular to see for which $A = A(N)$ is it possible for a Turing machine (say) to produce a prime number larger than $N$ in at most $A$ steps and using at most $A$ units of memory, taking only the integer $N$ as input.  If $A$ is such that this task is possible, we say that a prime number larger than $N$ can be found ``in time at most $A$''.

Note that if one allows probabilistic algorithms (so that the Turing machine also has access to a random number generator for input), then one can accomplish this in time polynomial in the length of $N$ (i.e. in time at most $\log^{O(1)} N$); indeed, one can select integers in $[N,2N]$ at random and test each one for primality.  (Here we use the usual asymptotic notation, thus $O(X)$ denotes a quantity bounded in magnitude by $CX$ where $C$ is independent of $N$, and $o(1)$ denotes a quantity bounded in magnitude by $c(N)$ for some $c(N)$ going to zero as $N \to \infty$.)
Using algorithms such as the AKS algorithm \cite{aks}, each such integer can be tested in time at most $\log^{O(1)} N$, and by the prime number theorem one has about a $1/\log N$ chance of success with each attempt, so the algorithm will succeed with (say) $99\%$ certainty after at most $\log^{O(1)} N$ units of time.  See also \cite{pss} for a probabilistic algorithm for detecting primes in polynomial time which predates the AKS algorithm, as well as a deterministic algorithm for detecting primes in a certain subset of the primes.

If however one insists on \emph{deterministic} methods, then the problem becomes substantially harder.  The sieve of Eratosthenes will supply all the primes between $N$ and $2N$, but requires $O(N^{1+o(1)})$ units of time and memory.  Using the AKS algorithm, if one can construct a subset $E$ of $[N,2N]$ in time at most $A$ that is guaranteed to contain at least one prime, then by testing each element of $E$ in turn for primality, we see that we can obtain a prime in time at most $A + O( N^{o(1)} |E| )$.   Thus, for instance, using Bertrand's postulate one recovers the $O(N^{1+o(1)})$ bound; using the unconditional fact that $[N,N+N^{0.525}]$ contains a prime for every large $N$ (see \cite{bhp}) we improve this to $O(N^{0.525+o(1)})$; and if one assumes the Riemann hypothesis, then as is well known we obtain a prime in an interval of the form $[N,N+N^{0.5+o(1)}]$ for all large $N$, leading to a bound of $O(N^{0.5+o(1)})$.

There are other sparse sets that are known to contain primes.  For instance, using the result of Heath-Brown \cite{heath} that there are infinitely many primes of the form $a^3+2b^3$ (which comes with the expected asymptotic), the above strategy gives an unconditional algorithm with time $O( N^{2/3 + o(1)} )$, since the number of integers in $[N,2N]$ of the form $a^3+2b^3$ is comparable to $N^{2/3}$.  More generally, if one assumes Schinzel's hypothesis $H$, which predicts the asymptotic number of primes inside any polynomial sequence $\{ P(n): n \in \N \}$, and in particular inside the sequence $n^k+1$ for any fixed $k = 1,2,\ldots$, then the same argument would give a deterministic prime-finding algorithm that runs in time $O(N^{1/k+o(1)})$.  Unfortunately the asymptotic for primes of the form $n^k+1$ is not known even for $k=2$, which is a famous open conjecture of Landau.

A famous conjecture of Cram\'er \cite{cramer} (see also \cite{granville} for refinements) asserts that the largest prime gap in $[N,2N]$ is of the order of $O(\log^2 N)$, which would give a deterministic algorithm with run time $O( \log^{O(1)} N )$.  Unfortunately, this conjecture is also well out of reach of current technology (the best bound on prime gaps being the $O(N^{0.525+o(1)})$ result from \cite{bhp} mentioned earlier, or $O(\sqrt{N \log N})$ assuming the Riemann hypothesis \cite{cramer}).

Another way to proceed is to find an efficient way to solve the following \emph{decision problem}: given a subinterval $[a,b]$ of $[N,2N]$, how quickly can one decide whether such an interval contains a prime?  If one could solve each such problem in time at most $A$, then one could locate a prime in $[N,2N]$ in time $O( A \log N )$, by starting with the interval $[N,2N]$ (which is guaranteed to contain a prime, by Bertrand's postulate) and then performing a binary search, repeatedly subdividing the interval into two approximately equal pieces and using the decision problem to locate a subinterval that also contains a prime.

Because primality testing is known to be in the complexity class $P$ (see \cite{aks}), we see that the above decision problem is in the complexity class $NP$.  Thus, if $P=NP$, we could locate a prime deterministically in time at most $\log^{O(1)} N$.   Of course, this conjecture is also unproven (and is widely believed to be false).

Given that there is a probabilistic algorithm to locate primes in time polynomial in the digits, it may seem that the conjecture $P=BPP$ would be able to quickly imply a fast algorithm to locate primes.  Unfortunately, to use the $P=BPP$ conjecture, one would presumably need to obtain a bounded-error probabilistic polynomial (BPP) time algorithm for solving the above decision problem (or some closely related problem), and it is not clear how to achieve this\footnote{For further discussion of this issue, see 

{\tt michaelnielsen.org/polymath1/index.php?title=Oracle\_counterexample\_to\_finding\_pseudoprimes} .}.

One way to solve the above decision problem would be to find a quick way to compute $\pi(x)$, the number of primes less than or equal to $x$, for $x$ in $[N,2N]$, since an interval $[a,b]$ contains a prime if and only if $\pi(b)-\pi(a-1) > 0$.  The fastest known elementary method to compute $\pi(x)$ is the Meissel-Lehmer method \cite{lmo}, \cite{rivat}, which takes time $O(x^{2/3}/\log^2 x)$ and leads to a $O(N^{2/3+o(1)})$ algorithm.  

On the other hand, if one can calculate $\pi(x)$ for $x \in [N,2N]$ \emph{approximately} in time $A$ to a guaranteed error of $L$ (say), then a modification of the above arguments shows that in time $O( N^{o(1)} A)$, one can find a subinterval of $[N,2N]$ of length  $O(N^{o(1)} L)$.  (The only thing one has to be careful of is to ensure in the binary search algorithm that the density of primes in the interval is always $\gg 1/\log N$, but this is easily accomplished.)   It was observed by Lagarias and Odlyzko \cite{lag} that by using an explicit contour integral formula for $\pi(x)$ (or the closely related expression $\psi(x) = \sum_{n \leq x}\Lambda(n)$) in terms of the Riemann zeta function, one could compute $\pi(x)$ to accuracy $L$ using $O( N^{o(1)} \frac{N}{L} )$ time\footnote{The basic idea is to use quadrature to integrate a suitable contour integral involving the zeta function on the interval from $2-iT$ to $2+iT$, where $T$ is comparable to $N^{o(1)} \frac{N}{L}$.  In \cite{lag} it is also observed that the method also lets one compute $\pi(x)$ exactly in time $O(N^{1/2+o(1)})$, by smoothing the sum $\psi(x)$ at scale $O(N^{1/2+o(1)})$ and using the sieve of Eratosthenes to compute exactly the error incurred by such a smoothing.}.  This is enough to obtain an interval of length $O(N^{1/2+o(1)})$ that is guaranteed to contain a prime, in time $O(N^{1/2+o(1)})$; testing each such element for primality, one then obtains a deterministic prime-finding algorithm that unconditionally takes $O(N^{1/2+o(1)})$ time (thus matching the algorithm that was conditional on the Riemann hypothesis).  To our knowledge, this is the best known algorithm in the literature for deterministically finding primes.

\subsection{Beating the square root barrier?}

We conjecture that the square root barrier for the decision problem can be broken:

\begin{conjecture}\label{break}  There exists an absolute constant $c>0$, such that one can (deterministically) decide whether a given interval $[a,b]$ in $[N,2N]$ of length at most $N^{1/2+c}$ contains a prime in time $O(N^{1/2-c+o(1)})$.
\end{conjecture}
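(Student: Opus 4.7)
The plan is to reduce the decision problem to an approximate evaluation of $\psi(b)-\psi(a-1)=\sum_{a \le n \le b}\Lambda(n)$ to accuracy $o(L/\log N)$ with $L := b-a \le N^{1/2+c}$, which detects the presence or absence of primes via the expected density. By truncated Perron's formula, this amounts to evaluating the logarithmic derivative $-\zeta'/\zeta$, or equivalently a Dirichlet polynomial approximation to $\Lambda$ of length $\asymp N$, at $T := N^{1/2-c+o(1)}$ regularly spaced points on a short vertical segment just right of the critical line. The Lagarias--Odlyzko approach effectively spends $O(N^{o(1)}\cdot N/L)$ per point, giving the current $O(N^{1/2+o(1)})$ barrier; the goal is to reduce the amortised per-point cost by a power of $N$.

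The mechanism I would try is to replace $\Lambda(n)$ on $[N,2N]$ by its Heath-Brown combinatorial identity, which writes $\Lambda$ as a signed combination of $O(\log^{O(1)} N)$ convolutions $a_1 * \cdots * a_k$ with each $a_i \in \{1,\log,\mu\}$ supported in a dyadic range $[M_i, 2M_i]$ and $M_1 \cdots M_k \asymp N$. Each piece contributes a product $\prod_i F_{M_i}(s)$ of Dirichlet polynomials, which it suffices to batch-evaluate at the $T$ sample points. The savings should come from two algorithmic ingredients. First, for a short factor ($M_i \ll T$), a batched FFT over the geometric progression defining the $T$ sample points evaluates $F_{M_i}$ at all of them in total time $O((M_i+T)^{1+o(1)})$. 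Second, for a long ``smooth'' factor $a_i=1$, the Riemann--Siegel formula (and the subsequent refinements of Hiary) evaluates the corresponding partial zeta sum at a single point in time $O(M_i^{1/2+o(1)})$, with further amortisation expected across the $T$ sample points. Combining these across the Type I and the unbalanced Type II ranges would, heuristically, give a total cost of $N^{1-c+o(1)}$ per target interval.

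The principal obstacle is the balanced Type II range, in which Heath-Brown's identity inevitably produces a bilinear convolution with two factors of lengths $M,K$ both comparable to $N^{1/2}$ and with essentially arbitrary bounded coefficients; here neither the FFT speed-up nor the Riemann--Siegel speed-up directly applies. Improving on $O(N^{1/2+o(1)})$ in this range appears to require a non-trivial bilinear cancellation estimate of the shape
\[
\sum_{|t_j| \le T} \Bigl|\sum_{m \sim M}\sum_{k \sim K} \alpha_m \beta_k (mk)^{-it_j}\Bigr|^2 \ll N^{1-\delta}\,T\,\|\alpha\|_2^2\|\beta\|_2^2
\]
valid uniformly for generic bounded sequences $\alpha,\beta$, which is not currently available in this balanced regime and is closely linked to the known barriers to improving Ingham-type zero-density estimates. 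It is precisely here that I expect the full conjecture to get stuck; the paper's partial result on the \emph{parity} of the prime count presumably circumvents this by working modulo $2$, where the sign structure collapses and some of the balanced Type II contributions can be killed by sieve identities involving $\mu$ that are unavailable in the integer-valued setting.
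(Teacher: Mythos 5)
The statement you were asked about is Conjecture~\ref{break}, which the paper does not prove; it is stated precisely as an open problem, and your proposal rightly stops short of claiming a proof. The useful comparison is therefore between your proposed \emph{strategy} and the route the paper actually takes toward its partial results (Theorems~\ref{parity} and~\ref{circo}), and there the two are genuinely different.

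Your sketch is analytic: truncated Perron for $\psi$, a Heath--Brown decomposition of $\Lambda$ into Type~I/II pieces, batched FFT evaluation of Dirichlet polynomials at $\asymp T$ sample points, Riemann--Siegel (and Hiary-type amortisation) for the long smooth factors, with the balanced Type~II range flagged as the obstruction. That diagnosis is reasonable, and the mean-value inequality you write down is indeed of the right shape to be the missing ingredient on such a route. But your closing guess about how the paper obtains the parity result does not match what the paper does. The paper never touches Heath--Brown's identity, Dirichlet polynomials, or the zeta function. It instead exploits the elementary identity $2^{\omega(n)} = \sum_{d^2\mid n}\mu(d)\,\tau(n/d^2)$ together with the observation that $2^{\omega(n)} \equiv 0 \pmod 4$ unless $n$ is $1$ or a prime power; this reduces the parity of $\pi(b)-\pi(a)$ to computing $\sum_{a\le n\le b}\tau(n)$ modulo~$4$ (the prime-power correction is cheap). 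That divisor sum is then attacked by the Dirichlet hyperbola identity, and the power saving below $x^{1/2}$ comes not from any bilinear cancellation but from Diophantine approximation: approximating $x/n_0^2$ by a rational $a/q$ with $q\le Q$ makes $n\mapsto\lfloor x/n\rfloor$ piecewise \emph{linear} on arithmetic progressions, so the sum decomposes into $O(x^{1/2-c_0+o(1)})$ pieces each summable in $O(x^{o(1)})$ time (Proposition~\ref{partition}). For the circuit-complexity strengthening (Theorem~\ref{circo}) the paper further needs a power saving for sums $\sum_m t^{am^2+bm+c}$, which it obtains via Strassen fast matrix multiplication (Lemma~\ref{tamlemma}), again a purely algebraic device. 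So while your intuition that working modulo~$2$ ``collapses sign structure'' is in the right spirit, the actual mechanism is not a sieve cancellation of Type~II terms; it is that passing to parity (via $2^{\omega}$ mod~$4$) lets one swap the weight $\Lambda$ for the weight $\tau$, which is far more tractable combinatorially. Neither your route nor the paper's currently yields the full conjecture, and the paper, like you, leaves it at a parity result together with a bounded-error probabilistic test.
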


This would of course imply a bound of $O(N^{1/2-c+o(1)})$ for finding a prime in $[N,2N]$ deterministically, since as mentioned earlier we can locate an initial interval of length at most $N^{1/2+c}$ containing a prime in time $O(N^{1/2-c+o(1)})$, and then proceed by binary search.

As mentioned earlier, it would suffice to be able to compute $\pi(x)$ in time $O(x^{1/2-c+o(1)})$.  We do not know how to accomplish this, but we have the following partial result:

\begin{theorem}[Computing the parity of $\pi(x)$]\label{parity}  There exists an absolute constant $c>0$, such that one can (deterministically) decide whether a given interval $[a,b]$ in $[N,2N]$ of length at most $N^{1/2+c}$ contains an \emph{odd number} of primes in time $O(N^{1/2-c+o(1)})$.
\end{theorem}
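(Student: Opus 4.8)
The plan is to reduce the parity of $\pi(x)$ — and hence, by differencing at the endpoints, the parity of $\pi(b)-\pi(a-1)$ — to the parity of a constrained count of lattice points under a hyperbola, and then to evaluate that parity in time $O(N^{1/2-c+o(1)})$ by approximating the hyperbola with line segments and summing floors of linear forms by continued fractions.

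First I would reduce to computing $\pi(x)\bmod 2$ for a single $x\asymp N$. Next I would pass from primes to prime powers: writing $\Pi^{\ast}(x)$ for the number of prime powers up to $x$, one has $\pi(x)=\Pi^{\ast}(x)-\sum_{j\ge 2}\pi(\lfloor x^{1/j}\rfloor)$, and the correction sum involves $\pi$ only at arguments of size $O(N^{1/2})$ (with $O(\log N)$ terms); computing its parity by the same method applied recursively contributes only $O(N^{1/4-c/2+o(1)})$ and is negligible. So it suffices to compute $\Pi^{\ast}(x)\bmod 2$.

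The elementary input is the congruence $\mathbf{1}_{\mathrm{pp}}(n)\equiv 2^{\omega(n)-1}\pmod 2$ valid for $n\ge 2$, where $\mathbf{1}_{\mathrm{pp}}$ is the indicator of prime powers and $\omega(n)$ is the number of distinct prime factors (both sides equal $1$ when $\omega(n)=1$ and are even otherwise). Thus $\Pi^{\ast}(x)\equiv\frac12\bigl(S(x)-1\bigr)\pmod 2$ with $S(x):=\sum_{n\le x}2^{\omega(n)}$, so it is enough to compute $S(x)\bmod 4$. Since $\sum_n 2^{\omega(n)}n^{-s}=\zeta(s)^2/\zeta(2s)$, we may write $S(x)=\sum_{j\le\sqrt x}\mu(j)\,D(\lfloor x/j^2\rfloor)$, where $D(m)=\sum_{k\le m}d(k)$ is the divisor-summatory function. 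Inserting the hyperbola identity $D(m)=2\sum_{e\le\sqrt m}\lfloor m/e\rfloor-\lfloor\sqrt m\rfloor^2$ and using the standard fact $\sum_{j\le T}\mu(j)\lfloor T/j\rfloor^2=2\Phi(T)-1$ (the number of coprime pairs in $[1,T]^2$, with $\Phi(T)=\sum_{n\le T}\phi(n)$ computable in time $O(T^{2/3+o(1)})$ by the usual recursion over the $O(\sqrt T)$ distinct values of $\lfloor T/d\rfloor$), one obtains
\[
S(x)\ \equiv\ 2\bigl(W(x)\bmod 2\bigr)-\bigl(2\Phi(\lfloor\sqrt x\rfloor)-1\bigr)\pmod 4,\qquad W(x):=\sum_{j\le\sqrt x}\mu(j)\sum_{e\le\sqrt x/j}\Bigl\lfloor\frac{x}{j^2e}\Bigr\rfloor .
\]
Reducing $W(x)$ modulo $2$ replaces $\mu(j)$ by the indicator of squarefree $j$ and each floor by a count of a third variable, giving
\[
W(x)\ \equiv\ \#\bigl\{(j,e,f):\ j\ \text{squarefree},\ je\le\sqrt x,\ j^2ef\le x\bigr\}\pmod 2 .
\]

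The main obstacle is to evaluate this last parity in time $O(N^{1/2-c+o(1)})$. Naively it is a sum of $\asymp\sqrt x\log x$ terms, and even the known $O(m^{1/3+o(1)})$ algorithm for a single value $D(m)$, applied separately to each $D(\lfloor x/j^2\rfloor)$, only gives total cost $\sum_{j\le\sqrt x}(x/j^2)^{1/3}\asymp x^{1/2}$, so the barrier is not crossed for free. The intended route is to decompose the ranges of $j$ and $e$ dyadically, to strip the squarefreeness of $j$ by Möbius inversion via $\mu^2(j)=\sum_{k\le x^{1/4}}\mu(k)\mathbf{1}_{k^2\mid j}$, and on each resulting block to replace the hyperbolic boundary $f\le x/(j^2e)$ by a controlled number of line segments (as in Huxley-type lattice-point counting), so that the count becomes a sum of many terms of the form $\sum_m\lfloor\alpha m+\beta\rfloor$ over arithmetic progressions, each evaluable in $O(\log^{O(1)}x)$ time by the three-distance theorem / continued fractions. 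The gain below $x^{1/2}$ should come from two features special to this setting: working mod $2$ kills the Möbius signs and collapses $2^{\omega(n)}\bmod 4$ to an indicator — precisely the step that has no analogue if one wants the full count of primes rather than its parity — and the smoothness of $x/(j^2e)$ in the indices allows whole blocks of $(j,e)$ to be handled at once instead of one term at a time. Converting these two features into a fixed exponent gain $c>0$ — in particular, controlling the accumulated rounding errors of the piecewise-linear approximations while keeping the parity exact, and disposing of the entire range $j\le\sqrt x$ — is where essentially all the difficulty lies, and is where the exponential-sum estimates developed elsewhere in the paper enter.
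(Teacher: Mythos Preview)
Your reduction is essentially the same as the paper's: pass from $\pi$ to prime powers, use $2^{\omega(n)}\bmod 4$ to isolate them, and express $S(x)=\sum_{n\le x}2^{\omega(n)}$ as $\sum_j \mu(j)\,D(\lfloor x/j^2\rfloor)$ with $D$ the divisor-summatory function. Up to this point everything is fine (and your observation about $\Phi(T)$ is correct but unnecessary, since the paper simply computes each $D(x/j^2)$ exactly and sums the contributions with the $\mu(j)$ sign; nothing is ever reduced mod $2$ at that stage).

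The genuine gap is the last paragraph. You correctly observe that computing each $D(m)$ in time $m^{1/3+o(1)}$ and summing over $j$ gives only $x^{1/2+o(1)}$, so a new idea is needed; but you then do not supply one. The mechanisms you point to are both wrong. First, the mod-$2$ collapse (``$\mu(j)$ becomes the squarefree indicator'') buys nothing here: the paper's gain comes from computing each $D(m)$ \emph{exactly} as an integer in time $O(m^{1/2-c_0})$, with no parity shortcut at all. Second, there are no exponential-sum estimates in the paper; the entire saving is elementary. The actual engine is a single proposition: using Dirichlet's theorem to approximate $x/n_0^2\approx a/q$ with $q\le Q=x^{0.1}$, one shows that on each block $\{n_0,\dots,n_0+qQ-1\}$ the map $n\mapsto\lfloor x/n\rfloor$ is piecewise linear with $O(q)$ pieces, so that $\{n\le\sqrt x\}$ decomposes in time $O(x^{0.49+o(1)})$ into $O(x^{0.49+o(1)})$ arithmetic progressions on which $\lfloor x/n\rfloor$ is linear. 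Summing an arithmetic progression of values of a linear function is $O(x^{o(1)})$, so $\sum_{n\le\sqrt x}\lfloor x/n\rfloor$, and hence $D(x)$, is computed in time $O(x^{1/2-c_0+o(1)})$. Plugging this into the sum over $j$ (with the range $j>N^{0.49}$ handled trivially by switching the order of summation) gives the claimed $O(N^{1/2-c+o(1)})$.

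So what is missing from your proposal is precisely the heart of the argument: a concrete sub-$\sqrt{m}$ algorithm for $D(m)$. Your sketch of ``piecewise-linear approximation plus continued fractions'' is in the right direction, but you need to actually carry it out (the Dirichlet-approximation block decomposition above is how), and you should drop the detour through the triple lattice-point count mod $2$, which only obscures the structure.
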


We prove this result in Section \ref{parity-sec}; the key observation is that the parity of the prime counting function $\pi(x)$ is closely connected to the divisor sum function $\sum_{n \leq x} \tau(n)$, which will be computed efficiently by invoking the standard Dirichlet hyperbola identity
\begin{equation}\label{hyperbola}
\begin{split}
\sum_{n \leq x} \sum_{d|n} f(d) g(\frac{n}{d}) &= \sum_{n,m: nm \leq x} f(n) g(m) \\
&= \sum_{n \leq y} g(n) F(\frac{x}{n}) + \sum_{m \leq x/y} f(m) G(\frac{x}{m}) - F(y) G(x/y)
\end{split}
\end{equation}
for any functions $f, g: \N \to \R$, where $F(x) := \sum_{n \leq x} f(n)$ and $G(x) := \sum_{m \leq x} g(m)$; see for instance \cite[\S 3.2, Theorem 1]{tenenbaum}.

Note that once one has Theorem \ref{parity}, and assuming that one can find an interval $[a,b]$ which contains an odd number of primes, then the binary search method will locate a prime deterministically in time $O(N^{1/2-c+o(1)})$, since if one subdivides an interval containing an odd number of primes into two subintervals, then at least one of these must also contain an odd number of primes.  However, we do not know of a method to quickly and deterministically locate an interval with an odd number of primes.

In fact we can establish the following stronger result.  Given an interval $[a,b]$, we define the \emph{prime polynomial} $P(t) = P_{a,b}(t)$ as
$$ P_{a,b}(t) := \sum_{a \leq p\leq b} t^p,$$
where $p$ ranges over primes in $[a,b]$.  Thus for instance $[a,b]$ contains a prime if and only if $P(1)$ is non-zero, or equivalently if $P(t) \hbox{ mod } 2$ is non-zero, where we view $P(t) \hbox{ mod } 2$ as an element of the polynomial ring ${\bf F}_2[t]$ over the field ${\bf F}_2$ of two elements.  

Given a polynomial $P(t)$ over a ring $R$, we say that $P$ has \emph{circuit complexity} $O(M)$ if, after time $O(M)$, one can build a circuit of size $O(M)$ consisting of the arithmetic operations (addition, subtraction, multiplication, and division\footnote{Traditionally, division is not considered an arithmetic operation for the purpose of circuit complexity, but it is convenient for us to modify the definition because we will be taking advantage of division at a few places in the paper.  Also note that in our definition, it is not enough for a circuit to merely exist; it must also be \emph{constructible} within the specified amount of time.}), as well as the primitive polynomials $1, t$, whose output is well-defined in $R[t]$ and is equal to $P$.

\begin{theorem}\label{circo}  Suppose that $[a,b]$ is an interval in $[N,2N]$ of size at most $N^{1/2+c}$ for some sufficiently small $c$.  Then the polynomial $P_{a,b}(t) \hbox{ mod } 2$ has circuit complexity $O(N^{1/2-c+o(1)})$.
\end{theorem}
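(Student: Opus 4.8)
The plan is to pass, modulo $2$, from the prime polynomial to a divisor‑sum polynomial that the Dirichlet hyperbola identity \eqref{hyperbola} can assemble as a short straight‑line program, and then to use the shortness of $[a,b]$ to keep that program below length $N^{1/2}$.

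First I would strip off the higher prime powers. Writing $P_{a,b}(t)=\sum_{a\le n\le b,\ \Lambda(n)\neq 0}t^{n}-\sum_{a\le p^{k}\le b,\ k\ge 2}t^{p^{k}}$, the second sum is a polynomial with only $O(N^{c+o(1)})$ nonzero coefficients: there are $O((b-a)N^{-1/2})=O(N^{c})$ integers whose square lies in $[a,b]$ and $O(N^{1/3})$ integers with a higher power in $[a,b]$, and each can be located and primality‑tested in time $N^{o(1)}$. So this term can be written down explicitly within budget, and it remains to build, modulo $2$, the generating polynomial $B_{a,b}(t):=\sum_{a\le n\le b,\ \omega(n)=1}t^{n}$ of prime powers. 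For this I would use an elementary congruence turning the prime‑power indicator into a reduced divisor‑type sum: for $n>1$ one has $\mathbf{1}_{\omega(n)=1}\equiv \tfrac12\,2^{\omega(n)}=\tfrac12\sum_{d\mid n}\mu(d)^{2}\pmod 2$ — equivalently, exactly half of the squarefree divisors of $n$ are divisible by the least prime factor of $n$. One may also run the whole argument through the ordinary divisor function $\tau$ via the identity $\sum_{n\le x}\tau(n)=\sum_{d\le x}\lfloor x/d\rfloor$ coming from \eqref{hyperbola}, at the price of a larger but highly structured set of ``exceptional'' $n$ to correct for (integers of the form $p^{a}m^{2}$), which one can hope to handle recursively by the same algorithm applied to the much shorter intervals $[a/m^{2},b/m^{2}]$.

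Next I would build such a divisor sum out of ``geometric blocks''. Applying \eqref{hyperbola} at $b$ and at $a-1$ and subtracting, the divisor sum becomes a combination of contributions of the form $\sum_{e\in I}t^{de}$ with $I$ a contiguous interval; each such contribution is a geometric series $t^{dm_{0}}\bigl((t^{d})^{\ell}-1\bigr)/(t^{d}-1)$ of circuit complexity $O(\log N)$ (repeated squaring, then a single division), and a sum over squarefree $d$ in a short interval splits, via $\mu(d)^{2}=\sum_{k^{2}\mid d}\mu(k)$, into $N^{o(1)}$ further geometric blocks. Executed carelessly this uses $\asymp\sqrt{b}\asymp N^{1/2}$ blocks, merely reproducing the known $O(N^{1/2+o(1)})$ bound. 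The gain comes from $b-a\le N^{1/2+c}$: grouping the summation variable $d$ into ranges on which $\lceil a/d\rceil$ and $\lfloor b/d\rfloor$ are both constant, only $O(N^{1/2-c})$ of these ranges meet $\{d>N^{1/2+c}\}$ (for such $d$ both quantities lie in $[1,N^{1/2-c}]$), and on each range the contribution is again $N^{o(1)}$ geometric blocks. The complementary range $d\le N^{1/2+c}$ is the delicate one: there the conjugate variable is pinned to a window of length $O(N^{1/2+c}/d)$, so I would apply \eqref{hyperbola} a second time (a Vaughan‑type splitting) and recombine the resulting bilinear pieces using a bounded number of polynomial multiplications over ${\bf F}_{2}$ — recall that $A(t)B(t)$ has circuit complexity at most the sum of those of $A$ and $B$ plus $O(1)$, so nothing is ever expanded.

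The main obstacle I expect is precisely this balancing: choosing the cut between the ``short'' and ``long'' summation ranges, and how often to iterate \eqref{hyperbola}, so that the direct (Type~I) part costs only $O(N^{1/2-c+o(1)})$ blocks while the remaining (Type~II) bilinear part is still realizable by an $O(N^{1/2-c+o(1)})$‑gate circuit rather than by a full convolution; this is where the hypothesis that $c$ be sufficiently small is consumed, and it is also where one must verify that any recursion on the exceptional integers $p^{a}m^{2}$ actually converges (the sum over $m$ of the sub‑costs must stay $O(N^{1/2-c+o(1)})$). A secondary point to check is that every division the circuit performs is legitimate in ${\bf F}_{2}[t]$: the denominators are products of polynomials $t^{d}-1$, which are nonzero in ${\bf F}_{2}[t]$, so all intermediate rational functions are well‑defined, and one must confirm that the final output is a genuine polynomial and that the division by $2$ implicit in the identity $\mathbf{1}_{\omega(n)=1}\equiv\tfrac12\,2^{\omega(n)}$ has been eliminated before passing to ${\bf F}_{2}$. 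Granting all this, the size and running‑time bounds follow at once, since the block list has length $O(N^{1/2-c+o(1)})$ and each block is produced in time $N^{o(1)}$; Theorem~\ref{parity} then follows by evaluating the resulting polynomial near $t=1$, working in truncated Laurent series in $t-1$ over ${\bf F}_{2}$ to avoid the poles of the individual blocks.
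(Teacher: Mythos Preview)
Your reduction to a divisor‐type generating polynomial and your treatment of the ``easy'' range (where one of the two factors exceeds $N^{1/2+c}$, so the conjugate variable lies in $[1,N^{1/2-c}]$ and there are only $O(N^{1/2-c})$ constant blocks) are fine and match the paper's outline. The gap is the ``delicate'' range, which you dispose of by a hand‐wave (``apply \eqref{hyperbola} a second time, Vaughan‐type splitting, bilinear pieces, polynomial multiplication''). This does not work: after the hyperbola symmetrisation you are left with a region where both factors are of size $\asymp N^{1/2}$, and there the number of level sets of $\lfloor b/n\rfloor$ is genuinely $\asymp N^{1/2}$, so grouping into blocks on which $\lceil a/n\rceil$ and $\lfloor b/n\rfloor$ are \emph{constant} gives no saving whatsoever. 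Your suggested fix via ``a bounded number of polynomial multiplications'' cannot help, because the monomial $t^{nm}$ does not factor as a product $A_n(t)B_m(t)$; there is no bilinear structure in ${\bf F}_2[t]$ to exploit here, and a second application of \eqref{hyperbola} only reshuffles the same $\asymp N^{1/2}$ terms.

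The paper breaks this barrier by a genuinely different mechanism that your proposal is missing. Instead of blocks on which $\lfloor b/n\rfloor$ is constant, it partitions $\{N^{1/2-c}\le n\le\sqrt{b}\}$ into $O(N^{1/2-c_0})$ arithmetic progressions on which $\lfloor b/n\rfloor$ is \emph{linear} in $n$ (and $\lfloor b/n\rfloor-\lfloor a/n\rfloor$ constant); this is Proposition~\ref{partition2}, proved via Dirichlet/continued‐fraction approximation of $x/n_0^2$. On each such progression the exponent $nm$ becomes a \emph{quadratic} polynomial in the progression parameter, and the second key input (Lemma~\ref{tamlemma}) is that a sum $\sum_{m<q}t^{am^2+bm+c}$ has circuit complexity $O(q^{1-c_1+o(1)})$: one writes $m$ in base $q^{1/3}$, realises the sum as $\operatorname{tr}(ABC)$ for three $q^{1/3}\times q^{1/3}$ matrices of monomials, and invokes Strassen's sub‐cubic matrix multiplication. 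Neither of these two ingredients --- the linear (rather than constant) partition of the hyperbola, and fast matrix multiplication for quadratic exponential sums --- appears in your sketch, and without at least one nontrivial idea of that strength the middle range will keep you stuck at $N^{1/2+o(1)}$.
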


We prove this theorem in Section \ref{circuit}.

Observe that if $g \in {\bf F}_2[t]$ is a polynomial of degree at most $N^{c/2+o(1)}$, then any arithmetic operation in the quotient space ${\bf F}_2[t] / (g)$ can be performed in time $O(N^{c/2+o(1)})$ (using the fast multiplication algorithm to evaluate multiplication in this space, and Euler's theorem and the power method to perform multiplicative inversion).  As a consequence of this and the above theorem, we see that $P_{a,b}(t) \hbox{ mod } (2, g)$ can be computed in time $O( N^{1/2-c/2+o(1)} )$.  When $g(t) = t-1$, this is Theorem \ref{parity}.  But this theorem is more general.  For instance, applying the above argument with $g$ equal to a cyclotomic polynomial, it is not difficult to see that one can compute the parity of the reduced prime counting functions $\pi(x;a,q) := |\{ p \leq x: p \equiv a \hbox{ mod } q \}|$ for any positive integer $q = O(N^{c/10})$ in time $O(N^{1/2-c/4+o(1)})$.  Unfortunately, we were not able to use this to unconditionally establish Conjecture \ref{break}; it is \emph{a priori} conceivable (but quite unlikely) that an interval $[a,b]$ might contain a non-zero number of primes, but have an even number of primes in every residue class mod $q$  with $q = O(N^{c/10})$.

On the other hand, as the prime polynomial $P_{a,b}(t) \hbox{ mod } 2$ has degree $O(N)$, it is easy to see that the proportion of polynomials of degree at most $N^{c/4}$ that do not divide $P_{a,b}(t) \hbox{ mod } 2$ is bounded away from zero.  (Indeed, a positive proportion of such polynomials contain a prime factor of degree at least $N^{c/8}$, but by unique factorization, there are $O(N)$ such primes, and each one only divides at most $2^{-N^{c/8}}$ of the polynomials of degree at most $N^{c/4}$.)  As such, we see that we can obtain a bounded-error probabilistic algorithm for solving the decision problem that runs in time $O(N^{1/2-c/2+o(1)})$, by testing whether the prime polynomial $P_{a,b}(t)$ vanishes modulo $2$ and $g(t)$, where $g$ is a randomly selected polynomial of degree at most $N^{c/4}$.  Unfortunately, the run time of this algorithm is not polynomial in the number of digits, and so the $P=BPP$ hypothesis does not yield any improvements over existing algorithms.

In Section \ref{extend} we discuss possible strategies that could lead to a full resolution of Conjecture \ref{break}.

\subsection{About this project}

This paper is part of the \emph{Polymath project}, which was launched by Timothy Gowers in February 2009 as an experiment to see if research mathematics could be conducted by a massive online collaboration.  This project (which was administered by Terence Tao) is the fourth project in this series.  Further information on this project can be found on the web site \cite{polywiki}.  Information about this specific polymath project may be found at

\centerline{\tt michaelnielsen.org/polymath1/index.php?title=Finding\_primes}

and a full list of participants and their grant acknowledgments may be found at

\centerline{\tt michaelnielsen.org/polymath1/index.php?title=Polymath4\_grant\_acknowledgments}

We thank Ryan Williams and Tom\'as Oliveira e Silva for corrections, Jeffrey Shallit for a reference, and the anonymous referee for many cogent suggestions and corrections.

\section{Computing the parity of $\pi(x)$}\label{parity-sec}

We now prove Theorem \ref{parity}.  Let $c > 0$ be a small number to be chosen later.   Let $\tau(n) := \sum_{d|n} 1$ be the number of divisors of $n$, and let $\omega(n) := \sum_{p|n} 1$ be the number of distinct primes that divide $n$ (with the convention that $\omega(1)=0$).  One easily verifies the identity
\begin{equation}\label{omn}
2^{\omega(n)} = \sum_{d: d^2|n} \mu(d) \tau(n/d^2)
\end{equation}
where $\mu$ is the M\"obius function\footnote{The \emph{M\"obius function} is defined by setting $\mu(p_1 \ldots p_k) := (-1)^k$ for any product $p_1\ldots p_k$ of distinct primes $p_1,\ldots,p_k$, and $\mu(n)=0$ whenever $n$ is not square-free (i.e. it is divisible by a perfect square larger than $1$).}, by checking this first on prime powers and then using multiplicativity.  Now for $n>1$, $2^{\omega(n)}$ is divisible by $4$, except when $n$ is a prime power $n=p^j$, in which case it is equal to $2$.  This gives the identity
$$ \sum_{a \leq n \leq b} 2^{\omega(n)}  \equiv 2 \sum_{j=1}^\infty |\{ p \in [a^{1/j},b^{1/j}]: p \hbox{ prime}\}| \hbox{ mod } 4.$$
Clearly we may restrict $j$ to size $O(\log N)$.  For any $j \geq 2$, the interval $[a^{1/j},b^{1/j}]$ has size $O(N^c)$ (by the mean value theorem), and so the $j^{\operatorname{th}}$ summand on the RHS can be computed in time $O(N^{c+o(1)})$ by the AKS algorithm \cite{aks}.  Thus we see that to prove Theorem \ref{parity}, it will suffice to compute the quantity
$$ \sum_{a \leq n \leq b} 2^{\omega(n)}$$
in time $O(N^{1/2-c+o(1)})$.  Using \eqref{omn}, we can expand this expression as
\begin{equation}\label{dm}
 \sum_d \mu(d) \sum_{a/d^2 \leq m \leq b/d^2} \tau(m).
\end{equation}
Clearly $d$ can be restricted to be $O(N^{1/2})$.

We first dispose of the large values of $d$ in which $d > N^{0.49}$ (say).  Then $m = O(N^{0.02})$, so we can rearrange this portion of \eqref{dm} as
\begin{equation}\label{dam}
 \sum_{m = O(N^{0.02})} \sum_{\sqrt{a/m} \leq d \leq \sqrt{b/m}; d \ge N^{0.49}} \mu(d) \tau(m).
\end{equation}
For each value of $m$, there are $O(N^c)$ possible values of $d$, each of size $O(N^{1/2})$.  Each such $d$ can be factored using trial division in time $O(N^{1/4+o(1)})$ (or one can use more advanced factoring algorithms if desired), and so each of the $O(N^{0.02+c})$ summands can be computed in time $O(N^{1/4+o(1)})$, giving a net cost of $O(N^{0.27+c+o(1)})$ which is acceptable for $c$ small enough.

For the remaining values of $d$, we can use the sieve of Erathosthenes to factorise all the $d$ (and in particular, compute $\mu(d)$) in time $O(N^{0.49+o(1)})$.  So the main task is to compute the inner sum of \eqref{dm} for such $d$.

We will shortly establish

\begin{theorem}\label{dirich}  The expression $\sum_{n \leq x} \tau(n)$ can be computed in time $O(x^{1/2-c_0+o(1)})$ for some absolute constant $c_0>0$.
\end{theorem}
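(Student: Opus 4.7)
The natural first step is to apply the Dirichlet hyperbola identity \eqref{hyperbola} with $f = g = \mathbf{1}$, so that $F(t) = G(t) = \lfloor t \rfloor$, yielding
$$\sum_{n \leq x} \tau(n) = 2 \sum_{n \leq \sqrt{x}} \left\lfloor \frac{x}{n} \right\rfloor - \lfloor \sqrt{x} \rfloor^2.$$
Evaluating each term on the right directly gives the trivial $O(x^{1/2 + o(1)})$ algorithm, which only establishes the theorem with $c_0 = 0$. To gain something nontrivial one must exploit the geometric fact that $S := \sum_{n \leq \sqrt{x}} \lfloor x/n \rfloor$ counts lattice points in the region $\{(u,v) : 1 \leq u \leq \sqrt{x},\, 1 \leq v \leq x/u\}$ beneath the convex hyperbolic arc $\Gamma : v = x/u$.

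The plan is to compute this lattice count by approximating $\Gamma$ from below by a piecewise-linear curve $\Pi$ whose vertices are lattice points and which has no lattice point strictly between itself and $\Gamma$. The count of lattice points under a single line segment with integer endpoints $[(u_1,v_1),(u_2,v_2)]$ can be computed in $O(\log^{O(1)} x)$ time by a Euclidean-algorithm reduction for floor sums of the form $\sum_k \lfloor (ak+b)/q \rfloor$ (essentially a Stern--Brocot descent on the slope $a/q$). Consequently, if $\Pi$ has $K$ pieces, $S$ is evaluated in $O(K \log^{O(1)} x)$ time plus the cost of constructing $\Pi$. I would choose $\Pi$ to be the upper boundary of the convex hull of the lattice points lying beneath $\Gamma$, and would construct it by walking along the hull: given a current hull vertex, locate the next one by a Stern--Brocot descent in the tree of rational slopes, guided by the local slope $-x/u^2$ of $\Gamma$ and terminated at the mediant that first crosses $\Gamma$; each such next-vertex query runs in polylog time.

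The principal obstacle is bounding $K$. Using the classical lattice-point estimate for convex hulls (in the spirit of Andrews and Jarn\'{i}k, which gives $O(A^{1/3})$ hull vertices inside a convex region of area $A$), applied to the region beneath $\Gamma$ of area $O(x \log x)$, one expects $K = O(x^{1/3 + o(1)})$, giving a total runtime of $O(x^{1/3 + o(1)})$ and hence allowing $c_0$ to be any constant strictly less than $1/6$. A secondary technical matter is the nearly vertical portion of $\Gamma$ where $u$ is small: there the hull description is inefficient, but the contribution is only $O(x^{1/3})$ lattice points and can be handled by brute force. The hardest part of the argument is to package the Stern--Brocot next-vertex subroutine and verify simultaneously that it is correct, that it runs in polylog time per hull vertex, and that the total number of hull vertices visited is indeed $O(x^{1/3 + o(1)})$; once these pieces are in place the rest of the algorithm is bookkeeping.
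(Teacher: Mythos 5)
Your proposal is correct in spirit and takes a genuinely different route from the paper's. The paper's argument, after the same Dirichlet hyperbola reduction to $\sum_{n \leq \sqrt{x}} \lfloor x/n \rfloor$, proceeds by Diophantine approximation: fix $n_0$, find $a/q$ with $q \leq Q := x^{0.1}$ and $|x/n_0^2 - a/q| \leq 1/(qQ)$, write $n = n_0 + lq + r$, and observe that $\lfloor x/n \rfloor = -al + \lfloor P(l) \rfloor$ for a slowly varying rational function $P$, so that $n \mapsto \lfloor x/n \rfloor$ is linear on $O(q)$ arithmetic progressions covering a block of length $qQ$. Iterating gives the $O(x^{0.49+o(1)})$-piece partition of Proposition \ref{partition}, which immediately yields the theorem; a sharper version of the same Diophantine argument in the refinement subsection pushes the exponent to $1/3$. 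Your route --- replacing the hyperbolic arc by a lattice-vertex polyline discovered by Stern--Brocot descent, counting lattice points under each segment by a Euclidean-type floor-sum reduction, and controlling the number of segments by an Andrews/Jarn\'{\i}k-type bound --- is the classical $O(x^{1/3+o(1)})$ convex-hull algorithm for the divisor summatory function. Both give the theorem (indeed with a better exponent than the paper's main proof needs), but the Diophantine/arithmetic-progression decomposition was chosen in the paper because it carries over directly to the circuit-complexity setting: Proposition \ref{partition2} and Corollary \ref{code} re-use the same partition into progressions to build a short circuit for $\sum_{a < n \leq b} \tau(n) t^n$ via Lemma \ref{tamlemma}, whereas the convex-hull description does not translate into arithmetic circuits nearly as cleanly --- a point the paper makes explicitly when discussing the refinement.

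Two cautions about the sketch, both repairable but not mere bookkeeping. First, the upper boundary of the convex hull of the lattice points lying beneath $\Gamma$ need not itself lie beneath $\Gamma$: a chord between two staircase corners $(n_1,\lfloor x/n_1\rfloor)$ and $(n_2,\lfloor x/n_2\rfloor)$ can cross the hyperbola (e.g.\ $x=100$, $n_1=5$, $n_2=10$), so the property ``no lattice point strictly between $\Pi$ and $\Gamma$'' is not automatic for that choice of $\Pi$; one standard fix is to instead take the \emph{lower} convex hull of lattice points lying \emph{on or above} the (convex) curve, which is guaranteed to separate, and correct for the boundary. Second, the region beneath $\Gamma$ is not convex (the epigraph of a convex function is convex; the hypograph is not), so Andrews' bound on hull vertices inside a convex region of area $A$ does not apply verbatim; one has to localize to dyadic arcs and invoke Jarn\'{\i}k-type bounds on lattice points near a strictly convex arc, or appeal directly to the known analyses of this algorithm. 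Once those points are fixed the proposal is a sound alternative proof of Theorem \ref{dirich}.
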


Assuming this for the moment, we see that for each $d \leq N^{0.49}$, the summand in \eqref{dm} can be computed in time $O( N^{o(1)} (N/d^2)^{1/2-c_0} )$.  Summing in $d$, we obtain a total time cost of $O( N^{1/2-c_0/10 + o(1)} )$ (say), which is acceptable if $c$ is chosen small enough depending on $c_0$.  

So it suffices to establish Theorem \ref{dirich}. The argument here is loosely inspired by the arguments used to establish the elementary bound $\sum_{n \leq x} \tau(n) = x \log x - (2\gamma - 1) x + O(x^{1/3+o(1)})$ in \cite[Chapter 3]{vin}.

Clearly we may shift $x$ to be a non-integer.  We then apply the Dirichlet hyperbola identity \eqref{hyperbola} (with $f=g=1$ and $y=\sqrt{x}$) to expand
$$ \sum_{n \leq x} \tau(n) = 2 \sum_{n \leq \sqrt{x}} \left\lfloor \frac{x}{n} \right\rfloor - \lfloor \sqrt{x} \rfloor^2.$$
It thus suffices to evaluate the integer
$$ \sum_{n \leq \sqrt{x}} \left\lfloor \frac{x}{n} \right\rfloor $$
in time $O(x^{1/2-c_0+o(1)})$.   In fact, we have

\begin{proposition}[Complexity of the hyperbola]\label{partition}  In time $O(x^{0.49+o(1)})$, one can obtain a partition of the discrete interval $\{n: 1 \leq n \leq \sqrt{x}\}$ into $O(x^{0.49+o(1)})$ arithmetic progressions, with the function $n \mapsto \left\lfloor \frac{x}{n} \right\rfloor$ linear on each arithmetic progression.
\end{proposition}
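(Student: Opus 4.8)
The plan is to exploit the convexity and controlled curvature of the map $t \mapsto x/t$ together with rational approximations to its slope coming from the continued fraction algorithm. The starting observation is that if $\lfloor x/n\rfloor$ is to agree with an affine function $n \mapsto \alpha n + \beta$ on an arithmetic progression $\{n_0, n_0+q, n_0+2q, \ldots\}$, then since the values must all be integers we are forced to take $\alpha = -p/q$ for some integer $p$; so the common difference $q$ must be the denominator of a rational approximation to the relevant slope. Conversely, fix $n_0$, let $m_0 := \lfloor x/n_0 \rfloor$, and choose $-p/q$ close to the tangent slope $-x/n_0^2$. Then the line through $(n_0,m_0)$ of slope $-p/q$ takes exactly the integer values $m_0 - pj$ at the progression points $n_0 + jq$, and one checks by a Taylor estimate that the graph of $t \mapsto x/t$ stays within a band of height $O(1)$ below this line over an interval of length $\asymp (n_0^3/x)^{1/2}$ around $n_0$: the curvature contributes a deviation $O((\text{length})^2 x/n_0^3)$, and the slope error contributes $O((\text{length}) \cdot |p/q - x/n_0^2|)$, which is kept $O(1)$ by choosing $p/q$ from the continued fraction expansion of $x/n_0^2$. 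Translating "graph in a unit band below a line that lands on $\Z$ exactly at the progression points" into the exact identity $\lfloor x/(n_0+jq)\rfloor = m_0 - pj$ is a short but slightly delicate argument; the one thing that can go wrong is that the graph, while inside the band, crosses an integer value strictly between two consecutive progression terms and thereby shifts a floor, and this is handled by shrinking the progression by a bounded factor and peeling off $O(1)$ exceptional terms as singleton progressions.

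With this building block in hand, I would construct the partition dyadically. Split $\{1,\dots,\lfloor\sqrt x\rfloor\}$ into $O(\log x)$ blocks $(M,2M]$. On such a block the tangent slope $-x/t^2$ sweeps an interval of length $\asymp x/M^2$; run the continued fraction / Stern--Brocot machinery to subdivide this slope interval so that on each part a single denominator $q$ is appropriate, the corresponding sub-interval of $n$'s having length $\asymp (M^3/x)^{1/2}$ (the curvature-limited length), and then split that sub-interval into its $q$ residue classes modulo $q$, each of which is one of the progressions produced by the building block. A routine count bounds the number of progressions used on the block $(M,2M]$, and summing the resulting geometric-type series over dyadic $M \le \sqrt x$ gives a total of $O(x^{1/2-\delta+o(1)})$ progressions for some $\delta>0$; since Theorem~\ref{dirich} only needs \emph{some} positive gain, there is no reason to optimise $\delta$, and one simply absorbs all losses into the generous exponent $0.49$. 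Each individual progression, together with the pair $(p,q)$ and the intercept giving the affine formula for $\lfloor x/n\rfloor$ on it, is produced by a Euclidean/continued-fraction computation plus a bounded search, costing $x^{o(1)}$ time, so the whole partition is assembled in $O(x^{0.49+o(1)})$ time.

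The main obstacle is precisely the exact-linearity step in the building block: it is easy to make $\lfloor x/n\rfloor$ \emph{approximately} affine on a long progression, but certifying that a specific progression of a specific modulus makes it \emph{exactly} affine requires controlling the fractional parts $\{x/n\}$ along the progression and ruling out off-by-one jumps, and getting a clean statement here (as opposed to a messy case analysis) is where the real work lies; it is also the source of the gap between the ``morally correct'' exponent one might hope for and the stated $0.49$. A secondary point, easy but worth stating carefully, is that the progressions coming from different slope-sub-intervals and different residue classes must genuinely tile $\{1,\dots,\lfloor\sqrt x\rfloor\}$ with no gaps or overlaps; this is ensured by first fixing the block boundaries and the residue-class decomposition and only then cutting each residue class into consecutive runs, so that the union is a partition by construction.
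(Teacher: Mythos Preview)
Your strategy is essentially the paper's: approximate the slope $x/n_0^2$ by a rational $a/q$ via continued fractions, split into residue classes mod $q$, and use a Taylor expansion to show that along each residue class $\lfloor x/n\rfloor$ is linear up to an $O(1)$ correction. The paper uses a single fixed parameter $Q=x^{0.1}$ and advances $n_0$ by $qQ$ at each step rather than organising things dyadically, but that is cosmetic.

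Where your proposal has a genuine gap is precisely the step you flag as ``the main obstacle''. Your diagnosis (``the graph crosses an integer value strictly between two consecutive progression terms and thereby shifts a floor'') is not the real issue: the value of $\lfloor x/(n_0+jq)\rfloor$ does not depend on what the graph does between progression points. The actual issue is that the error
\[
E(j)\ :=\ \frac{x}{n_0+jq}-\bigl(m_0-pj\bigr)
\]
lies in an interval of length $O(1)$ but not necessarily in $[0,1)$, so $\lfloor x/(n_0+jq)\rfloor = m_0-pj+\lfloor E(j)\rfloor$ with $\lfloor E(j)\rfloor$ taking $O(1)$ values. Crucially, each value is taken on a union of $O(1)$ \emph{intervals} of $j$, not on $O(1)$ isolated points; so ``peeling off $O(1)$ exceptional terms as singleton progressions'' does not work, since a whole run of $j$'s can be off by the same constant. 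What you must do instead is subdivide the residue class into $O(1)$ subprogressions, on each of which $\lfloor E(j)\rfloor$ is constant (so $\lfloor x/n\rfloor$ is still linear, just with a shifted intercept).

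The paper carries this out cleanly by keeping the Taylor remainder \emph{exact}: one has the identity $\lfloor x/n\rfloor = -al + \lfloor P(l)\rfloor$ where
\[
P(l)=\frac{x}{n_0}-\frac{xr}{n_0^2}-\frac{\theta l}{Q}+\frac{x(lq+r)^2}{n_0^2(n_0+lq+r)}
\]
is an explicit rational function of $l$ of bounded degree. Since $P$ ranges over an interval of length at most $3$, the floor $\lfloor P(l)\rfloor$ takes at most three values, and for each value $k$ the set $\{l:\lfloor P(l)\rfloor=k\}$ is bounded by the roots of $P(l)=k$ and $P(l)=k+1$, hence is a union of $O(1)$ intervals whose endpoints are computed in $O(x^{o(1)})$ time via the cubic formula. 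This is the missing ingredient that turns your sketch into a proof.
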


Since one can use explicit formulas to sum any linear function with coefficients of size $O(x)$ on an arithmetic progression of integers of size $O(x)$ in time $O(x^{o(1)})$, Theorem \ref{dirich} now follows immediately from the above proposition.

\begin{proof} By using the singleton sets $\{n\}$ to partition all the numbers less than $x^{0.49}$, we see that it suffices to partition the interval $\{ n: x^{0.49} \leq n \leq \sqrt{x}\}$.

Let $x^{0.49} \leq n_0 \leq \sqrt{x}$ be arbitrary, and set $Q := x^{0.1}$.  By the Dirichlet approximation theorem, there exist integers $1 \leq q \leq Q$ and $a \geq 1$ such that $|\frac{x}{n_0^2} - \frac{a}{q}| \leq \frac{1}{qQ}$.  These integers can be easily located in time $O(x^{o(1)})$ using continued fractions.  We now expand the quantity $\frac{x}{n}$ where $n = n_0 + l q + r$, $l \geq 0$, and $0 \leq r < q$.  Since
$$ \frac{1}{n_0 + y} = \frac{1}{n_0} - \frac{y}{n_0^2} + \frac{y^2}{n_0^2 (n_0+y)}$$
for any $y$, we have
$$ \frac{x}{n} = \frac{x}{n_0} - \frac{x (lq+r)}{n_0^2} + \frac{x (lq+r)^2}{n_0^2(n_0+y)}.$$
We expand $\frac{x}{n_0^2} = \frac{a}{q} + \frac{\theta}{qQ}$ for some explicitly computable $|\theta| \leq 1$, to obtain
$$ \frac{x}{n} = \frac{x}{n_0} - al - \frac{\theta l}{Q} - \frac{xr}{n_0^2} + \frac{x (lq+r)^2}{n_0^2(n_0+lq+r)}.$$
We thus have
$$ \left\lfloor \frac{x}{n} \right\rfloor = - al + \lfloor P(l) \rfloor$$
where $P = P_{x,n_0,a,q,\theta,r}$ is the rational function
$$ P(l) := \frac{x}{n_0} - \frac{xr}{n_0^2} - \frac{\theta l}{Q} + \frac{x (lq+r)^2}{n_0^2(n_0+lq+r)}.$$
The first two terms on the right-hand side are independent of $l$.  If we restrict $l$ to the range $0 \leq l \leq Q$, then the third term has magnitude at most $1$, and the fourth term has magnitude at most
$$ O( \frac{ x Q^4 }{n_0^3} ) = O( x^{-0.01} ).$$
Thus (for $x$ large enough) we see that $P$ fluctuates in an interval of length at most $3$, and so $\lfloor P(l) \rfloor$ takes at most three values. 
For any such value $k$, the set $\{ l: \lfloor P(l) \rfloor = k \}$ is a union of intervals, bounded by the sets $\{ l: P(l) = k \}$ and $\{ l: P(l) = k+1\}$.  As $P$ is a rational function in $l$ of bounded degree, we see from Bezout's theorem that these latter sets have cardinality $O(1)$, and so the set $\{ l: \lfloor P(l) \rfloor = k \}$ is the union of $O(1)$ intervals.  Furthermore, the endpoints of these intervals can be computed explicitly in time $O(x^{o(1)})$, by using the explicit formula for the solution of the cubic.  We conclude that in time $O(x^{o(1)})$, one can partition each arithmetic progression $\{ n_0+lq+r: 0 \leq l \leq Q \}$ for $0 \leq r < q$ into $O(1)$ subprogressions, with $n \mapsto \lfloor \frac{x}{n} \rfloor$ linear on each subprogression.  Performing this once for each residue class $r \hbox{ mod } q$, we see that in time $O(x^{o(1)} q)$, we can partition the interval $\{ n: n_0 \leq n < n_0 + qQ \}$ into $O(q)$ progressions, with $n \mapsto \left\lfloor \frac{x}{n} \right\rfloor$ linear on each progression.  If we apply this observation with $n_0$ set equal to the left endpoint of the interval 
$\{ n: x^{0.49} \leq n \leq \sqrt{x}\}$, we may partition an initial segment of this interval into progressions with the required linearity property.  Removing this initial segment, and iterating this procedure (updating $n_0$ and $q$ at each stage) we then obtain the claim.  (Note that if the interval $\{ n: n_0 \leq n < n_0 + qQ \}$ overflows beyond $\sqrt{x}$, then we may simply partition the remaining portion of the interval into singletons, at a cost of $O(x^{0.2})$ progressions.)
\end{proof}

\subsection{A refinement}

By modifying the above argument, one can in fact compute $\sum_{n \leq x} \tau(n)$ in $O(x^{1/3+o(1)})$ time, though this particular argument does not extend as easily to the polynomial setting as the one given above.  We sketch the details as follows.  As before, it suffices to compute
$$ \sum_{n \leq \sqrt{x}} \left\lfloor \frac{x}{n} \right\rfloor$$
in time $O(x^{1/3+o(1)})$.  By dyadically decomposing the interval $\{ n: n \leq \sqrt{x} \}$ into dyadic intervals $\{ x: A \leq n < 2A\}$ for various values of $A$, it suffices to compute
$$ \sum_{A \leq n < 2A} \left\lfloor \frac{x}{n} \right\rfloor$$
in time $O(x^{1/3+o(1)})$ for all $A \leq \sqrt{x}$.  We may assume that $A > 100 x^{1/3}$ since one can sum the series one term at a time otherwise.

We consider the subtask of computing a partial sum of the form
$$ \sum_{n_0 \leq n < n_0+q} \left\lfloor \frac{x}{n} \right\rfloor$$
where $A \leq n_0 < 2A$ and $q$ is chosen so that $|x/n_0^2 - a/q| \leq 1/qQ$ with $1 \leq q \leq Q$ and $a$ coprime to $q$ as above, where we now optimise $Q$ to equal $A x^{-1/3}$.  We claim that this sum can be computed in $O(x^{o(1)})$ time.

As this sum is an integer, it suffices to compute the sum with an error of less than $1/2$.  
Writing $n = n_0+r$ and $x/n_0^2 = a/q + \theta/qQ$ and expanding as before we have
$$ \frac{x}{n} = \frac{x}{n_0} - \frac{ar}{q} - \frac{\theta r}{qQ} + \frac{x r^2}{n_0^2 (n_0+r)}$$
and thus (for $0 \leq r < q$)
$$ \frac{x}{n} = \frac{x}{n_0} - \frac{ar}{q} + O\left( \frac{1}{q} \right)$$
where we have used the assumptions $q \leq Q = A x^{-1/3}$.

As $r$ runs from $0$ to $q-1$, the fractional parts of $\frac{ar}{q}$ take each of the values $\frac{0}{q}, \frac{1}{q}, \ldots, \frac{q-1}{q}$ exactly once, since $a$ is coprime to $q$.  We conclude that
$$ \left\lfloor  \frac{x}{n} \right\rfloor = \left\lfloor\frac{x}{n_0} - \frac{ar}{q}\right\rfloor$$
for all but $O(1)$ values of $r$, each of which can be computed explicitly in $O(x^{o(1)})$ time.  So we are left with computing
$$ \sum_{0 \leq r < q} \left\lfloor\frac{x}{n_0} - \frac{ar}{q}\right\rfloor = \sum_{0 \leq i < q} \left\lfloor \frac{x}{n_0} - \frac{i}{q} \right\rfloor$$
which can easily be computed in $O(x^{o(1)})$ time, and the claim follows.

A modification of the above argument shows that we can in fact compute $\sum_{n_0 \leq n < n_0+kq} \left\lfloor \frac{x}{n}\right\rfloor$ in $O(x^{o(1)})$ time whenever $kq = O(Q)$.  As such, we can compute the entire sum $\sum_{A \leq n < 2A} \left\lfloor \frac{x}{n} \right\rfloor$ in time $O( x^{o(1)} A / Q ) = O( x^{1/3+o(1)} )$ by summing in blocks of size $Q$, and the claim follows.

\section{The circuit complexity of the prime polynomial mod $2$}\label{circuit}

We now modify the above arguments to establish Theorem \ref{circo}.  We begin by showing a non-trivial gain in circuit complexity for a quadratic sum.

\begin{lemma}\label{tamlemma}  Let $a, b, c, q = O(N)$ be integers, then the expression
\begin{equation}\label{tam}
 \sum_{m=0}^{q-1} t^{am^2 + bm + c}
\end{equation}
has circuit complexity $O( N^{o(1)} q^{1-c_0} )$ in the polynomial ring $\Z[t]$ for some absolute constant $c_0>0$.
\end{lemma}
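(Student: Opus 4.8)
The plan is to reduce the quadratic exponential sum $\sum_{m=0}^{q-1} t^{am^2+bm+c}$ to a collection of \emph{linear} exponential sums over arithmetic progressions, which have small circuit complexity, by the same Dirichlet-approximation / Taylor-expansion mechanism used in Proposition \ref{partition}. The key point is that the floor function $\lfloor x/n\rfloor$ there played the role of "the part of the exponent that depends nonlinearly on the summation index", and here the genuinely nonlinear term is $am^2$, which we want to replace by a linear function of $m$ on each of a small number of subprogressions.

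First I would split the range $\{0,\dots,q-1\}$ into blocks of length $\ell$ for a parameter $\ell = q^{\alpha}$ to be optimised, writing $m = m_0 + j$ with $0 \le j < \ell$. On such a block, $am^2 + bm + c = a m_0^2 + b m_0 + c + (2a m_0 + b)j + a j^2$, so up to the $m_0$-dependent constant (which only multiplies the whole block sum by a fixed power of $t$) the exponent is $(2am_0+b)j + aj^2$. Now I would apply the Dirichlet approximation theorem to the coefficient $a$ of the quadratic term: there are $1 \le s \le S$ and $e$ with $|a - e/s| \le 1/(sS)$, and $S$ will be chosen so that over the block the "error" contribution $|a - e/s|\, j^2 \le S^{-1}\ell^2$ is controllably small. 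Writing $a j^2 = (e/s)j^2 + (\text{error})$, the term $(e/s)j^2$ only depends on $j \bmod s$ — it takes at most $s$ distinct fractional values as $j$ runs over a residue class pattern — and the error term, being of small size, changes the integer part of the exponent on at most $O(1)$ sub-blocks; but since we are in $\Z[t]$ rather than dealing with floor functions, the cleaner statement is that $aj^2$ restricted to a fixed residue class $j \equiv i \pmod s$ differs from an explicit \emph{linear} function of $j$ by something constant on all but $O(1)$ values, so within each residue class the sum is a geometric-type (linear-exponent) sum over a progression plus $O(1)$ correction terms. Each linear-exponent sum $\sum t^{\alpha j + \beta}$ over an arithmetic progression has circuit complexity $O(N^{o(1)})$ — this is a telescoping/geometric series identity $\sum_{j} t^{\alpha j} = (t^{\alpha\cdot(\#\mathrm{terms})}-1)/(t^\alpha - 1)$, constructible by repeated squaring — exactly as the "explicit formulas summing a linear function on an arithmetic progression" were used after Proposition \ref{partition}.

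Counting: there are $q/\ell$ blocks; within each block, $s \le S$ residue classes, each contributing one linear-exponent sum of circuit complexity $N^{o(1)}$ plus $O(1)$ explicit corrections. So the total circuit complexity is $O(N^{o(1)}(q/\ell) S)$. We need $\ell$ large and $S$ large, but the constraint linking them is that the error term $|a-e/s| j^2 \lesssim \ell^2/S$ must be bounded (say $O(1)$), forcing $S \gtrsim \ell^2$; that would make $(q/\ell)S \gtrsim q\ell$, which is worse than $q$. So a single layer does not suffice, and — just as in the $O(x^{1/3})$ refinement versus the $O(x^{0.49})$ argument — one must either iterate the approximation (updating $s$ as one moves along, so that $S$ need only be of size $\ell$, giving total $O(N^{o(1)}(q/\ell)\ell) = O(N^{o(1)} q)$, still not a gain) or, more promisingly, exploit cancellation: when $s$ is moderately large, the linear-exponent sum $\sum_{j \equiv i (s)} t^{(e/s)j^2 \cdot(\dots)}$ over the full block need not be computed class-by-class but can be organised so that the $s$ classes are handled together at cost $O(N^{o(1)})$ rather than $O(s)$, because the map $i \mapsto (\text{that sum})$ is itself given by evaluating a fixed rational expression in $t$. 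I would therefore aim for: block length $\ell \approx q^{2/3}$, modulus $s \le S \approx q^{1/3}$, error $\approx \ell^2/S \approx q$ — hmm, that is still too big, so in fact the correct balance (mirroring the paper's $O(x^{1/3})$ computation, where $Q$ was optimised to $Ax^{-1/3}$) is to take $\ell$ and $S$ with $S \approx \ell$ and to absorb the block-sum over the $s$ residue classes into $O(N^{o(1)})$ operations, landing at total cost $O(N^{o(1)} q^{2/3})$, i.e. $c_0 = 1/3$.

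The main obstacle — and the step I expect to require the real work — is precisely this last manoeuvre: showing that the inner sum over the $s$ residue classes within a block, $\sum_{i=0}^{s-1} t^{(\text{quadratic-in-}i \text{ from the } (e/s)j^2 \text{ term})} \times (\text{geometric tail})$, can itself be evaluated in circuit complexity $O(N^{o(1)})$ (or at worst $O(N^{o(1)} s^{1-\delta})$) rather than the naive $O(s)$. This is again a quadratic exponential sum, now with denominator-$s$ quadratic phase, so one is tempted to recurse — but a naive recursion loses the $o(1)$ control. The clean route is a Gauss-sum / completion-of-the-square identity in $\Z[t]$: since $\gcd(e,s)=1$, completing the square turns $\sum_{i \bmod s} t^{e i^2 / s \cdot (\dots)}$ into a standard quadratic Gauss sum whose value is governed by a closed rational expression in a root of unity, computable in $\mathrm{polylog}$ time. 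Making this identity precise in the polynomial ring $\Z[t]$ (as opposed to over $\C$), and verifying that all the auxiliary quantities — the continued-fraction data $(s,e)$, the block decomposition, the $O(1)$ correction terms — are genuinely \emph{constructible} within the stated time bound, is where I would concentrate the detailed argument; everything else is bookkeeping parallel to Section \ref{parity-sec}.
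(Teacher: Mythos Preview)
Your approach has a genuine gap at the very first step: you propose to apply the Dirichlet approximation theorem to the coefficient $a$, but $a$ is an \emph{integer}, so the best rational approximation is $a = a/1$ with $s=1$, and the whole mechanism collapses. More broadly, the analogy with Proposition~\ref{partition} is misleading: there the exponent involved $\lfloor x/n\rfloor$, a non-integer quantity whose fractional part one could control; here the exponent $am^2+bm+c$ is already an integer for every $m$, so there is no ``error term'' to linearise away and no fractional-part structure to exploit. The Gauss-sum idea at the end runs into the same wall: the closed-form evaluation of $\sum_i \zeta^{ei^2}$ relies on the algebraic relations satisfied by a root of unity $\zeta$, but in $\Z[t]$ the indeterminate $t$ satisfies no relations whatsoever, so $\sum_{m} t^{am^2+bm+c}$ is a sum of (essentially) $q$ distinct monomials with no possible cancellation. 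Any gain must therefore come from a nontrivial circuit representation, not from simplifying the polynomial itself.

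The paper's proof is entirely different and rests on fast matrix multiplication. One reduces to $q=Q^3$ a perfect cube, writes $m=i+Qj+Q^2k$ in base $Q$, and observes that the quadratic $am^2+bm+c$ splits as $U(i,j)+V(j,k)+W(k,i)$ with each piece depending on only two of the three digits. Hence
\[
\sum_{m=0}^{q-1} t^{am^2+bm+c} \;=\; \sum_{i,j,k} t^{U(i,j)} t^{V(j,k)} t^{W(k,i)} \;=\; \operatorname{tr}(ABC),
\]
where $A,B,C$ are explicit $Q\times Q$ matrices over $\Z[t]$, each of circuit complexity $O(N^{o(1)}Q^2)$. Strassen's algorithm then multiplies them with a circuit of size $O(Q^{3-c_0})=O(q^{1-c_0/3})$, and taking the trace costs $O(Q)$ more. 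This is the missing idea: the saving comes not from number-theoretic approximation but from the subcubic complexity of matrix multiplication.
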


Note that this is a power saving over the trivial bound of $O(N^{o(1)} q )$ (note that by repeated squaring, any monomial $t^n$ with $n=O(N^{O(1)})$ has circuit complexity $O(N^{o(1)})$).

\begin{proof}  It suffices to establish this lemma when $q$ is a perfect cube $q=Q^3$, as the general case can then be established by approximating $q$ by the nearest cube and evaluating the remaining $O(q^{1/3})$ terms by hand.

Next, we expand $m$ in base $Q$ as $m = i + Qj + Q^2 k$ for $0 \leq i,j,k < Q$.  We can then expand $am^2 + bm + c$ as a quadratic polynomial in $i,j,k$, which we split as
$$ am^2 + bm + c = U(i,j) + V(j,k) + W(k,i)$$
for some explicit quadratic polynomials $U, V, W$, whose coefficients have polynomial size in $N$.  We can then express \eqref{tam} as
$$ \sum_{i=0}^{Q-1} \sum_{j=0}^{Q-1} \sum_{k=0}^{Q-1} t^{U(i,j)} t^{V(j,k)} t^{W(k,i)}$$
or more compactly as
$$ \operatorname{tr}( A B C )$$
where $A, B, C$ are the $Q \times Q$ matrices
$$ A := (t^{U(i,j)})_{0 \leq i,j < Q}; \quad B := (t^{V(i,j)})_{0 \leq j,k < Q}; \quad C := (t^{W(k,i)})_{0 \leq k,i < Q}.$$
Each of the matrices $A, B, C$ has a circuit complexity of $O(N^{o(1)} Q^2)$.  Using the Strassen fast matrix multiplication algorithm \cite{strassen}, one can multiply $A, B, C$ together using a circuit of complexity $O(Q^{3-c_0})$ for some absolute constant $c_0>0$.  Taking the trace requires another circuit of complexity $O(Q)$.  Putting all these circuits together and recalling that $Q = q^{1/3}$, one obtains the claim.
\end{proof}

It would be of interest to see if similar power savings can also be obtained for analogous sums in which the quadratic exponent $an^2+bn+c$ is replaced by a higher degree polynomial.  It may be that a generalisation of the Strassen algorithm to tensors would be relevant for this task.

Next, we need the following modification of Proposition \ref{partition}.

\begin{proposition}[Complexity of the hyperbola, II]\label{partition2}  There exists an absolute constant $c_0 > 0$ such that if $0 < c < c_0$ is sufficiently small, then for any $0 < x' < x$ with $x-x' \leq x^{1/2+c}$, and in time $O(x^{1/2 - c_0 + o(1)})$, one can obtain a partition of the discrete interval $\{n: x^{1/2-c} \leq n \leq \sqrt{x}\}$ into $O(x^{1/2-c_0+o(1)})$ arithmetic progressions, with the function $n \mapsto \lfloor \frac{x}{n} \rfloor$ linear on each arithmetic progression, and the function $n \mapsto \lfloor \frac{x}{n} \rfloor - \lfloor \frac{x'}{n} \rfloor$ is constant.
\end{proposition}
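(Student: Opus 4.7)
The plan is to follow the proof of Proposition \ref{partition} closely, with two key modifications: (i) shrink the Dirichlet scale from $Q = x^{0.1}$ down to $Q = x^{c_0}$ (with $c_0 > 0$ a small absolute constant chosen below), so that the total number of progressions drops from $O(x^{0.49+o(1)})$ to $O(x^{1/2-c_0+o(1)})$; and (ii) exploit the assumption $x - x' \leq x^{1/2+c}$ to show that $x$ and $x'$ share a common Dirichlet approximation, so that the partition produced for $x$ also makes the difference $\lfloor x/n \rfloor - \lfloor x'/n \rfloor$ constant.

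Fix $c_0 := 1/10$, set $Q := x^{c_0}$, and require $c < c_0/100$ (say). For each starting point $n_0 \in [x^{1/2-c}, \sqrt{x}]$, use continued fractions to find $1 \leq q \leq Q$ and $a$ with $|x/n_0^2 - a/q| \leq 1/(qQ)$. Since
$$\left|\frac{x'}{n_0^2} - \frac{x}{n_0^2}\right| = \frac{x - x'}{n_0^2} \leq \frac{x^{1/2+c}}{x^{1-2c}} = x^{-1/2+3c},$$
and $1/(qQ) \geq Q^{-2} = x^{-2c_0}$, our choice of $c_0$ gives $x^{-1/2+3c} \leq x^{-2c_0} \leq 1/(qQ)$, so $|x'/n_0^2 - a/q| \leq 2/(qQ)$ as well. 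Thus the same pair $(a,q)$ is a Dirichlet approximation of quality $O(1/(qQ))$ for both $x/n_0^2$ and $x'/n_0^2$.

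Now expand exactly as in Proposition \ref{partition}: for $n = n_0 + lq + r$ with $0 \leq l \leq Q$ and $0 \leq r < q$, one obtains $\lfloor x/n \rfloor = -al + \lfloor P(l) \rfloor$ and similarly $\lfloor x'/n \rfloor = -al + \lfloor P'(l) \rfloor$, where $P$ and $P'$ are rational functions of $l$ of bounded degree built from the expansion formula with $x$ and $x'$ respectively (using the common $a, q$). The linear-in-$l$ error contributes magnitude at most $1$, while the cubic tail contributes $O(xQ^4/n_0^3) = O(x^{-1/2+3c+4c_0})$, which is $O(1)$ for our parameter choices. Hence $\lfloor P(l) \rfloor$ and $\lfloor P'(l) \rfloor$ each take only $O(1)$ values on the AP, and by Bezout's theorem each of their level sets is a union of $O(1)$ subintervals whose endpoints are computable in time $O(x^{o(1)})$. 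Intersecting these two partitions yields $O(1)$ sub-APs on each of which both $\lfloor P(l) \rfloor$ and $\lfloor P'(l) \rfloor$ are individually constant; on such a sub-AP, $n \mapsto \lfloor x/n \rfloor$ is linear in $l$ with slope $-a$, and $n \mapsto \lfloor x/n \rfloor - \lfloor x'/n \rfloor = \lfloor P(l)\rfloor - \lfloor P'(l)\rfloor$ is identically constant, as required.

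Iterating over $n_0$ by advancing by $qQ$ at each stage, each block $[n_0, n_0 + qQ)$ is partitioned into $O(q)$ progressions in time $O(q \cdot x^{o(1)})$; summing over the $O(\sqrt{x}/(qQ))$ blocks gives $O(\sqrt{x}/Q) = O(x^{1/2-c_0})$ progressions in time $O(x^{1/2-c_0+o(1)})$. Boundary blocks that overshoot $\sqrt{x}$ are broken into singletons at a cost of at most $O(Q) = O(x^{c_0})$ extra progressions, negligible for the bound. The main obstacle is threading the parameter balance: one needs $c_0$ large enough to give a meaningful power saving, yet small enough that both the cubic error bound ($4c_0 + 3c < 1/2$) and the shared-Dirichlet bound ($2c_0 + 3c < 1/2$) survive, which forces $c$ to be strictly smaller than $c_0$ and caps $c_0$ below $1/8$.
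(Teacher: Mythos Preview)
Your proof is correct and follows essentially the same approach as the paper: choose a Dirichlet scale $Q$ that is a small power of $x$, verify that the hypothesis $x-x'\le x^{1/2+c}$ forces $x'/n_0^2$ to share the same rational approximation $a/q$ as $x/n_0^2$, and then rerun the argument of Proposition~\ref{partition} with the pair $(P,P')$ in place of $P$, intersecting their level-set partitions. Your write-up is in fact more explicit than the paper's (which merely sketches the modification); the only cosmetic slip is that the final overshoot is at most $qQ\le Q^2$ rather than $O(Q)$, which is still negligible against $x^{1/2-c_0}$.
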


\begin{proof}  Let $c_0 > 0$ be a sufficiently small constant, and assume that $0 < c < c_0$ is sufficiently small as well.
Let $x^{1/2-c} \leq n \leq \sqrt{x}$ be arbitrary, and set $Q := x^{10c_0}$.  As in the proof of Proposition \ref{partition}, there exist integers $1 \leq q \leq Q$ and $a \geq 1$ such that $\frac{x}{n_0^2} = \frac{a}{q} + \frac{\theta}{qQ}$ for some $|\theta| \leq 1$, where $n = n_0 + lq + r$ and $0 \leq l,q,r \leq Q$.  Since $n \geq x^{1/2-c}$, we have (for $x$ large enough) that $n_0 \geq x^{1/2-c}/2$ (say).  A brief computation (noting that $|x-x'| \leq x^{1/2+c}$) then shows that $\frac{x'}{n_0^2} = \frac{a}{q} + \frac{\theta'}{qQ}$ for some $|\theta'| \leq 2$ if $c$ is small enough and $x$ is sufficiently large.  The claim then follows by repeating the proof of Proposition \ref{partition} (the main difference being that the rational function $P$ is now replaced by a pair $P, P'$ of rational functions).
\end{proof}

We now combine Lemma \ref{tamlemma} and Proposition \ref{partition2} to obtain

\begin{corollary}\label{code}  If $c > 0$ is sufficiently small, then for any $0 < a < b < N$ with $b-a \leq N^{1/2+c}$, the polynomial
$$ \sum_{a < n \leq b} \tau(n) t^n$$
has circuit complexity $O(N^{1/2-c+o(1)})$ for some absolute constant $c>0$.
\end{corollary}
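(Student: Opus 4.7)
The plan is to polynomialize the proof of Theorem \ref{dirich} from Section \ref{parity-sec}, replacing the floor sum $\sum_{n\leq \sqrt{x}}\lfloor x/n\rfloor$ by a polynomial generating function and exploiting the quadratic-exponent savings afforded by Lemma \ref{tamlemma} on each short arithmetic progression produced by Proposition \ref{partition2}. Writing $\tau(n) = \sum_{d|n} 1$ and applying the Dirichlet hyperbola symmetry $d \leftrightarrow m$, we decompose
$$ \sum_{a < n \leq b} \tau(n) t^n \;=\; \sum_{a < dm \leq b} t^{dm} \;=\; 2 \sum_{\substack{d < m \\ a < dm \leq b}} t^{dm} \;+\; \sum_{a < d^2 \leq b} t^{d^2}. $$
Since $b-a \leq N^{1/2+c}$, the diagonal sum contains only $O(N^c)$ monomials, and the subsum with $\sqrt{a} < d \leq \sqrt{b}$ has $d$ in an interval of length $O(N^c)$ with inner $m$-ranges also of length $O(N^c)$; both can be written out term-by-term in $O(N^{2c+o(1)})$ time, acceptable for small $c$. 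What remains is to build a circuit for
$$ S(t) \;:=\; \sum_{d \leq \sqrt{a}} \;\sum_{m = \lfloor a/d\rfloor + 1}^{\lfloor b/d \rfloor} t^{dm}. $$

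For the small-$d$ range $d < N^{1/2-c}$, each inner sum is a geometric progression in $u = t^d$ with the closed form $u^{\lfloor a/d\rfloor+1}(u^{L_d}-1)/(u-1)$ where $L_d := \lfloor b/d\rfloor - \lfloor a/d\rfloor$; using repeated squaring this has circuit complexity $O(N^{o(1)})$ per $d$, for a total of $O(N^{1/2-c+o(1)})$.

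For the medium range $N^{1/2-c} \leq d \leq \sqrt{a}$, I would invoke Proposition \ref{partition2} with $x = b$ and $x' = a$ to obtain, in time $O(N^{1/2-c_0+o(1)})$, a partition into $O(N^{1/2-c_0+o(1)})$ arithmetic progressions of the form $\{d_0+qj : 0\leq j < J\}$ with step $q$ and length $J = O(Q)$, $Q = N^{10 c_0}$, on each of which $\lfloor b/d\rfloor = \alpha + \beta j$ is an explicit integer-linear function of $j$ and $L := \lfloor b/d\rfloor - \lfloor a/d\rfloor$ is a constant. Substituting $m = \alpha + \beta j - k$ for $k = 0,1,\ldots,L-1$, the portion of $S(t)$ coming from this progression becomes
$$ \sum_{k=0}^{L-1} \;\sum_{j=0}^{J-1} t^{(d_0+qj)(\alpha + \beta j - k)}, $$
and for each fixed $k$ the exponent is an explicit quadratic polynomial in $j$ with integer coefficients of size $N^{O(1)}$. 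Lemma \ref{tamlemma} then produces the inner sum in $j$ with a circuit of complexity $O(N^{o(1)} J^{1-c_0})$, giving cost $O(L \cdot N^{o(1)} J^{1-c_0})$ per progression.

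To total the costs, observe that at a dyadic scale $d \sim D$ in the medium range, the number of progressions is $O(D/Q)$, with $J \leq Q$ and $L \leq O(N^{1/2+c}/D)$, so the aggregate cost at scale $D$ is
$$ O\!\left( \frac{D}{Q} \cdot \frac{N^{1/2+c}}{D} \cdot Q^{1-c_0} \right) \cdot N^{o(1)} \;=\; O(N^{1/2+c - 10c_0^2 + o(1)}), $$
independent of $D$. Summing over the $O(\log N)$ dyadic scales yields the same total bound, which is $O(N^{1/2-c+o(1)})$ provided we choose $c \leq 5 c_0^2$, as we are free to do. The main obstacle I anticipate is the bookkeeping required to verify uniformly across progressions and residues $k$ that the quadratic exponent $(d_0+qj)(\alpha+\beta j - k)$ really has integer coefficients of polynomial size, so that Lemma \ref{tamlemma} applies with the same constant $c_0$ throughout; this should follow from the explicit form of the data $(d_0, q, \alpha, \beta)$ produced by the proof of Proposition \ref{partition2}, but must be tracked carefully to ensure the per-progression cost really is $O(N^{o(1)} J^{1-c_0})$ rather than losing an extra logarithmic-type factor that would spoil the small power saving.
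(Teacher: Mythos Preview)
Your proposal is correct and follows essentially the same architecture as the paper's proof: hyperbola decomposition, geometric series for small $d$, and Proposition~\ref{partition2} plus Lemma~\ref{tamlemma} for the medium range. The one noteworthy difference is in how you total the per-progression costs. The paper simply applies H\"older's inequality,
\[
\sum_{j=1}^k |P_j|^{1-c_1} \le \Bigl(\sum_j |P_j|\Bigr)^{1-c_1} k^{c_1},
\]
using only the \emph{stated} outputs of Proposition~\ref{partition2} (the total count $k=O(N^{1/2-c_0+o(1)})$ and $\sum_j |P_j|=O(N^{1/2})$), together with the crude bound $L=O(N^{2c})$. Your dyadic argument instead uses the per-scale bound ``$O(D/Q)$ progressions at scale $D$'' and the length bound $J\le Q$, neither of which appears in the statement of Proposition~\ref{partition2}; they are true, but you are reaching into its proof to extract them. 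This is fine, but the H\"older route is cleaner and more robust since it works directly from the black-box statement. Also note that the $c_0$ in Lemma~\ref{tamlemma} and the $c_0$ in Proposition~\ref{partition2} are a priori distinct constants (the paper's proof of the corollary calls the former $c_1$); your final exponent should read $1/2+c-10c_0c_1$ rather than $1/2+c-10c_0^2$, which does not affect the conclusion.
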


\begin{proof} This is analogous to Theorem \ref{dirich}.  We let $c>0$ be a sufficiently small quantity to be chosen later.

We may assume that $a, b$ are not integers.  We expand this polynomial as
$$ \sum_{n, m \geq 1: a < nm \leq b} t^{nm}.$$
Observe that if $a < nm \leq b$, then one either has $1 \leq n \leq \sqrt{b}$ or $1 \leq m \leq \sqrt{b}$, or both, with the last case occuring precisely when $a/\sqrt{b} \leq n \leq \sqrt{b}$ and $a/n \leq m \leq \sqrt{b}$.  In the first case, we rewrite the condition $a < nm \leq b$ as $a/n < m \leq b/n$; in the second case, we rewrite that condition as $a/m < n \leq b/m$.  After swapping $n$ and $m$ in the second case, we can rearrange the above polynomial as
$$ 2 \sum_{1 \leq n \leq \sqrt{b}} \sum_{a/n < m \leq b/n} t^{nm} - \sum_{a/\sqrt{b} \leq n \leq \sqrt{b}} \sum_{a/n \leq m \leq \sqrt{b}} t^{nm}.$$
The second sum contains only $O(N^{2c})$ terms and so can easily be verified to have a circuit complexity of $O(N^{2c+o(1)})$, which is acceptable.  So it will suffice to show that the sum
\begin{equation}\label{nah}
 \sum_{1 \leq n \leq \sqrt{b}} \sum_{\lfloor a/n \rfloor + 1 \leq m \leq \lfloor b/n \rfloor} t^{nm}
\end{equation}
has circuit complexity $O(N^{1/2-c+o(1)})$.  

Using the geometric series formula, the inner sum has circuit complexity $O(N^{o(1)})$ for each fixed $n$.  This is already sufficient to dispose of the contribution of the terms in \eqref{nah} for which $n \leq N^{1/2-c}$, so it remains to bound the circuit complexity of
$$
 \sum_{N^{1/2-c+o(1)} \leq n \leq \sqrt{b}} \sum_{\lfloor a/n \rfloor + 1 \leq m \leq \lfloor b/n \rfloor} t^{nm}.$$

Using Proposition \ref{partition2} and in time $O(N^{1/2-c_0+o(1)})$ for some absolute constant $c_0$ (independent of $c$), we may partition $\{n: N^{1/2-c_0+o(1)} \leq n \leq \sqrt{b}\}$ into arithmetic progressions $P_1,\ldots,P_k$ with $k = O(N^{1/2-c_0+o(1)})$, such that $\lfloor b/n \rfloor$ is a linear function of $n$ on each of these progressions, and $\lfloor a/n \rfloor - \lfloor b/n \rfloor$ is constant.  This constant is of size $O(N^{2c})$.  
  Applying Lemma \ref{tamlemma} (after first switching the order of summation), the sum 
$$\sum_{n \in P_j} \sum_{\lfloor a/n \rfloor + 1 \leq m \leq \lfloor b/n \rfloor} t^{nm}$$
has a circuit complexity of $O( N^{2c+o(1)} |P_j|^{1-c_1} )$ for some $c_1>0$, so that \eqref{nah} has a circuit complexity of
$$ O(N^{1/2-c_0+o(1)}) + \sum_{j=1}^k O( N^{2c+o(1)} |P_j|^{1-c_1} ).$$
By H\"older's inequality, one has
$$ \sum_{j=1}^k |P_j|^{1-c_1} \leq (\sum_{j=1}^k |P_j|)^{1-c_1} k^{c_1}.$$
Since $\sum_{j=1}^k |P_j| = O(N^{1/2})$ and $k = O(N^{1/2-c_0+o(1)})$, we obtain a total circuit complexity bound of
$$ O( N^{1/2 - c_0 c_1 + 2c + o(1)} )$$
and the claim follows if $c$ is chosen sufficiently small.
\end{proof}

Now we can prove Theorem \ref{circo}.  We repeat the arguments from the previous section.  First observe that
$$ \sum_{a \leq n \leq b} 2^{\omega(n)} t^n \equiv 2 P_{a,b}(t) + 2 \sum_{j=2}^\infty \sum_{a^{1/j} \leq p \leq b^{1/j}} t^{p^j} \hbox{ mod } 4.$$
Because $b-a = O(N^{1/2+c})$ and $b,a$ are comparable to $N$, we see from the mean value theorem that $b^{1/j}-a^{1/j} = O(N^c)$ for all $j \geq 2$.  We thus see that the total number of primes $p$ in the latter sum are $O(N^{c+o(1)})$ on the right-hand side, and so this sum has a circuit complexity of $O(N^{c+o(1)})$.  Thus it suffices to show that the polynomial
$$ \sum_{a \leq n \leq b} 2^{\omega(n)} t^n \hbox{ mod } 4$$
has circuit complexity $O(N^{1/2-c+o(1)})$.  Using \eqref{omn}, we rewrite this polynomial as
\begin{equation}\label{dm2}
 \sum_d \mu(d) \sum_{a/d^2 \leq m \leq b/d^2} \tau(m) t^{d^2 m}.
\end{equation}
Clearly $d$ can be restricted to be $O(N^{1/2})$.

Once again, we first dispose of the large values of $d$ in which $d > N^{0.49}$.  This portion of \eqref{dm2} can be rearranged as
$$ \sum_{m = O(N^{0.02})} \sum_{\sqrt{a/m} \leq d \leq \sqrt{b/m}; d \ge N^{0.49}} \mu(d) \tau(m) t^{dm}.$$
Repeating the arguments from the previous section (and specifically, the arguments used to compute \eqref{dam}), this term can be given a circuit complexity of $O(N^{0.27+c+o(1)})$.

For the remaining values of $d$, we again use the sieve of Erathosthenes to compute all the $\mu(d)$ in time $O(N^{0.49+o(1)})$.  Using Lemma \ref{code}, each instance of the inner sum  $\sum_{a/d^2 \leq m \leq b/d^2} \tau(m) t^{d^2 m}$ has a circuit complexity of $O( (N/d^2)^{1/2-c_0+o(1)} )$ for some absolute constant $c_0 > 0$.  Summing in $d$ as before, we obtain a total circuit complexity of
$$ O(N^{0.49+o(1)} + \sum_{d \leq N^{0.49}} O( (N/d^2)^{1/2-c_0+o(1)} )$$
which sums to $O(N^{1/2-c+o(1)})$ as desired, for $c$ small enough.

\section{Possible extensions}\label{extend}

The circuit complexity bound on the prime polynomial $P_{a,b}(t)$ given by Theorem \ref{circo} lets us compute $P_{a,b}(t) \hbox{ mod } (2, g)$ in time $O(N^{1/2-c/2+o(1)})$ for any polynomial $g \in {\bf F}_2[t]$ of degree $O(N^{c/4})$, if $c > 0$ is sufficiently small.  Unfortunately, this is not strong enough to deterministically determine in time $O(N^{1/2-c/2+o(1)})$ whether $P_{a,b}(t)$ is non-trivial or not, although as mentioned in the introduction it at least gives a bounded-error probabilistic test in this amount of time.  It may be however that by using additional algorithms (such as the Fast Fourier Transform, or the multipoint polynomial evaluation algorithm of Borodin and Moenk\cite{boro}) one may be able to compute quantities such as $P_{a,b}(t) \hbox{ mod } (2, g)$ for multiple values of $g$ simultaneously in $O(N^{1/2-c/2+o(1)})$ time, or perhaps variants such as $P_{a,b}(t^j) \hbox{ mod } (2, g)$.  However, it is \emph{a priori} conceivable (though very unlikely) that the degree $O(N)$ polynomial $P_{a,b}(t) \hbox{ mod } 2$ is divisible by as many as $O(N^{1-c/4})$ irreducible polynomials $g \in {\bf F}_2[t]$ of degree $O(N^{c/4})$, so it is not yet clear to us how to use this sort of test to deterministically settle the decision problem in $O(N^{1/2-c+o(1)})$ time.  One possibility would be to find a relatively small set of $g$ for which it was not possible for $P_{a,b}(t) \hbox{ mod } 2$ to be simultaneously divisible by, without vanishing entirely.  Note that a somewhat similar idea is at the heart of the AKS primality test \cite{aks}.

If one could compute $\pi(x) \hbox{ mod } q$ (or $\pi(b)-\pi(a) \hbox{ mod } q$) for each prime $1 \leq q \leq O(\log N)$ in time $O(N^{1/2-c+o(1)})$ uniformly in $q$, where $x,a,b = O(N)$, then from the Chinese remainder theorem we could compute $\pi(x)$ or $\pi(b)-\pi(a)$ itself in time $O(N^{1/2-c+o(1)})$, thus solving Conjecture \ref{break}.  The above analysis achieves this goal for $q=2$.  However, the methods deteriorate extremely rapidly in $q$.  For instance, if one wished to compute $\pi(x) \hbox{ mod } 3$ by the above methods, one would soon be faced with understanding the sum 
$$\sum_{n<x} \tau_2(n) = \sum_{a,b,c \geq 1: abc \leq x} 1$$
where $x=O(N)$ and $\tau_2(n) := \sum_{a,b,c: abc = n} 1$ is the second divisor function.  (Observe that the expression $\sum_{d: d^3|n} \mu(d) \tau_2(n/d^3)$ is divisible by $9$ unless $n$ is equal to a $1$ or a power of a prime.)  The three-dimensional analogue of the Dirichlet hyperbola method allows one to evaluate this expression in time $O(N^{2/3+o(1)})$.  The type of arguments used in the previous sections would reduce cost this slightly to $O(N^{2/3-c+o(1)})$ for some small $c>0$ but this is inferior to the bound $O(N^{1/2+o(1)})$ that can already be obtained for $\pi(x)$.

We have not attempted to optimise the exponent savings $c>0$ appearing in the results of this paper.  It may be that improvements to these exponents may be obtained by making more accurate approximations of the discrete hyperbola $\{ (n,\lfloor \frac{x}{n} \rfloor): 1 \leq n \leq \sqrt{x}\}$ than the piecewise linear approximation given by Lemma \ref{partition}; for instance, piecewise polynomial approximations may ultimately be more efficient.

It may also be of interest to obtain circuit complexity bounds for more general expressions than the prime polynomial $\sum_{a \leq p \leq b} t^p$; for instance one could consider $\sum_{a \leq p \leq b} t^{p^2}$ or more generally $\sum_{a \leq p \leq b} t^{h(p)}$ for some fixed polynomial $h$.

Some progress along the above lines will appear in forthcoming work of Croot, Hollis, and Lowry (in preparation).

\end{document}